\newtheorem{theorem}{Theorem}[section]
\newtheorem{proposition}[theorem]{Proposition}
\newtheorem{corollary}[theorem]{Corollary}
\newtheorem{definition}[theorem]{Definition}
\newtheorem{remark}[theorem]{Remark}
\newtheorem{lemma}[theorem]{Lemma}
\newtheorem{example}[theorem]{Example}
\newcommand{\lk}{{\rm link}}
\newcommand{\cC}{{\mathcal C}}
\newcommand{\iI}{{\mathcal I}}
\newcommand{\sS}{{\mathcal S}}
\newcommand{\ZZ}{{\mathbb Z}}
\newcommand{\sm}{{\smallsetminus}}
\begin{document}
\title[The absolute order on the symmetric group is Cohen-Macaulay]
{The absolute order on the symmetric group, constructible
partially ordered sets and Cohen-Macaulay complexes}

\author{Christos~A.~Athanasiadis}
\author{Myrto~Kallipoliti}

\address{Department of Mathematics
(Division of Algebra-Geometry)\\
University of Athens\\
Panepistimioupolis\\
15784 Athens, Greece}
\email{caath@math.uoa.gr, mirtok@math.uoa.gr}

\date{June 11, 2007; revised, January 7, 2008.}
\thanks{ 2000 \textit{Mathematics Subject Classification.} Primary
06A11; \, Secondary 05A15, 05E25, 20F55. \\
First author supported by the 70/4/8755 ELKE Research Fund
of the University of Athens.}
\begin{abstract}
The absolute order is a natural partial order on a Coxeter group
$W$. It can be viewed as an analogue of the weak order on $W$ in
which the role of the generating set of simple reflections in $W$
is played by the set of all reflections in $W$. By use of a notion
of constructibility for partially ordered sets, it is proved that
the absolute order on the symmetric group is homotopy
Cohen-Macaulay. This answers in part a question raised by V.
Reiner and the first author. The Euler characteristic of the order
complex of the proper part of the absolute order on the symmetric
group is also computed.
\end{abstract}

\maketitle

\section{Introduction}
\label{intro}

Consider a finite Coxeter group $W$ with set of reflections $T$.
Given $w \in W$, let $\ell_T (w)$ denote the smallest integer $k$
such that $w$ can be written as a product of $k$ reflections in $T$.
The \emph{absolute order}, or \emph{reflection length order}, is
the partial order on $W$ denoted by $\preceq$ and defined by letting
\[ u \preceq v \ \ \ \text{if and only if} \ \ \ \ell_T (u) \, + \,
\ell_T (u^{-1} v) \, = \, \ell_T (v) \]
for $u, v \in W$. Equivalently, $\preceq$ is the partial order on $W$
with cover relations $w \prec wt$, where $w \in W$ and $t \in T$ are
such that $\ell_T (w) < \ell_T (wt)$. We refer to \cite[Section 2.4]{Ar2}
for elementary properties of the absolute order and related
historical remarks and mention that the pair $(W, \preceq)$ is a graded
poset having the identity $e \in W$ as its unique minimal element and
rank function $\ell_T$.

The significance of the absolute order in combinatorics, group
theory, invariant theory and representation theory stems from the
following facts. First, it can be viewed as an analogue of the
weak order \cite[Chapter 3]{BB} on $W$ (this order can be defined
by replacing the generating set of all reflections in $W$, in the
definition of the absolute order, with the set of simple
reflections). Second, the maximal chains in intervals of the form
$[e, w]$ correspond to reduced words of $w$ with respect to the
alphabet $T$ and are relevant in the study of conjugacy classes in
$W$ \cite{Ca}. Third, the rank-generating polynomial of $(W,
\preceq)$ is given by
\[ \sum_{w \in W} \ q^{\ell_T (w)} \ = \ \prod_{i=1}^\ell \
(1 + e_i q), \]
where $e_1, e_2,\dots,e_\ell$ are the exponents \cite[Section 3.20]{Hu}
of $W$ and $\ell$ is its rank. Furthermore, if $c$ denotes a Coxeter
element of $W$, then the combinatorial structure of the intervals in
$(W, \preceq)$ of the form $[e, c]$, known as \emph{noncrossing partition
lattices}, plays an important role in the construction of new monoid
structures and $K(\pi, 1)$ spaces for Artin groups associated with $W$;
see for instance \cite{Be, Br, BrW}.

When $c$ is a Coxeter element, the intervals $[e, c]$ in the
absolute order have pleasant combinatorial and topological
properties. In particular, they were shown to be shellable in
\cite{ABW}. The question of determining the topology of $(W \sm
\{e\}, \preceq)$ was raised by Reiner \cite{Re} \cite[Problem
3.1]{Ar1} and the first author (unpublished) and was also posed by
Wachs \cite[Problem 3.3.7]{Wa}. In this paper we focus on the case
of the symmetric group $\sS_n$ (the case of other Coxeter groups
will be treated in \cite{Ka}). We will denote by $P_n$ the
partially ordered set $(\sS_n, \preceq)$ and by $\bar{P}_n$ its
proper part $(\sS_n \sm \{e\}, \preceq)$. Before we state our main
results, let us describe the poset $P_n$ more explicitly. Given a
cycle $c = (i_1 \ i_2 \ \cdots \ i_r) \in \sS_n$ and indices $1
\le j_1 < j_2 < \cdots < j_s \le r$, we say that the cycle
$(i_{j_1} \ i_{j_2} \ \cdots \ i_{j_s}) \in \sS_n$ can be obtained
from $c$ by deleting elements. Given two disjoint cycles $a, b \in
\sS_n$ each of which can be obtained from $c$ by deleting
elements, we say that $a$ and $b$ are \emph{noncrossing} with
respect to $c$ if there does not exist a cycle $(i \ j \ k \ l)$
of length four which can be obtained from $c$ by deleting
elements, such that $i, k$ are elements of $a$ and $j, l$ are
elements of $b$. For instance, if $n=9$ and $c = (3 \ 5 \ 1 \ 9 \
2 \ 6 \ 4)$ then the cycles $(3 \ 6 \ 4)$ and $(5 \ 9 \ 2)$ are
noncrossing with respect to $c$ but $(3 \ 2 \ 4)$ and $(5 \ 9 \
6)$ are not. It can be checked \cite[Section 2]{Br} that for $u, v
\in \sS_n$ we have $u \preceq v$ if and only if
\begin{itemize}
\itemsep=0pt
\item[$\bullet$]
every cycle in the cycle decomposition for $u$ can be obtained from
some cycle in the cycle decomposition for $v$ by deleting elements
and
\item[$\bullet$]
any two cycles of $u$ which can be obtained from the same cycle $c$
of $v$ by deleting elements are noncrossing with respect to $c$.
\end{itemize}
Figure \ref{fig:abs4} depicts the Hasse diagram of $P_n$ for $n=4$.
We note that the rank of an element $w$ of $P_n$ is equal to $n-p$,
where $p$ is the number of cycles in the cycle decomposition for $w$.
In particular, $P_n$ has rank $n-1$ and its maximal elements are
the cycles in $\sS_n$ of length $n$.

\vspace*{0.1 in}
\begin{figure}
\epsfysize = 3.2 in \centerline{\epsffile{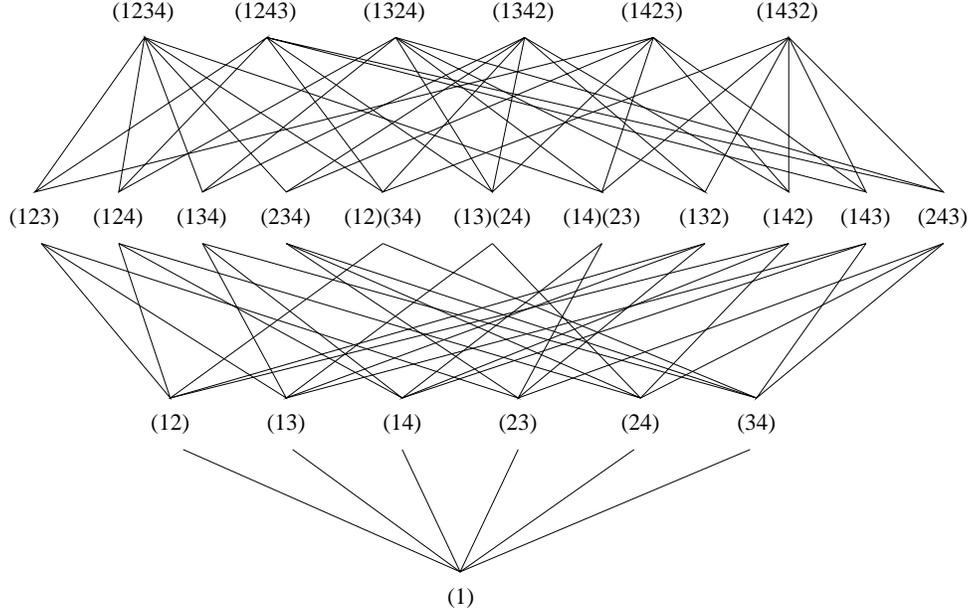}}
\caption{The absolute order on the symmetric group $\sS_4$.}
\label{fig:abs4}
\end{figure}

\medskip
The main results of this paper are as follows.

\begin{theorem}
The poset $\bar{P}_n$ is homotopy Cohen-Macaulay for
all $n \ge 1$. In particular, it is homotopy equivalent to a wedge
of $(n-2)$-dimensional spheres and Cohen-Macaulay over $\ZZ$.
\label{thm1}
\end{theorem}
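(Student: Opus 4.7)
The plan is to introduce a notion of constructibility for bounded graded posets that parallels the classical notion for pure simplicial complexes, verify that constructibility implies both homotopy Cohen-Macaulayness of the proper part and a wedge-of-spheres homotopy type, and then exhibit $\hat{P}_n := P_n \cup \{\hat{1}\}$, obtained from $P_n$ by adjoining a new top element, as a constructible poset of rank $n$. A natural definition is: a bounded graded poset $Q$ of rank $n \geq 1$ is \emph{constructible} if either the order complex of its open interval $(\hat{0}, \hat{1})$ is shellable, or $Q = Q_1 \cup Q_2$, where $Q_1, Q_2$ are constructible bounded graded subposets of $Q$ of rank $n$ sharing $\hat{0}$ and $\hat{1}$, and $Q_1 \cap Q_2$ is a constructible bounded graded poset of rank $n-1$. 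A standard induction on rank, combining the Mayer-Vietoris sequence with the gluing lemma for homotopy Cohen-Macaulay complexes, then shows that the open interval of any constructible rank-$n$ poset has order complex homotopy equivalent to a wedge of $(n-2)$-spheres, hence is homotopy Cohen-Macaulay and Cohen-Macaulay over $\ZZ$.

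Granted this framework, Theorem \ref{thm1} reduces to showing that $\hat{P}_n$ is constructible of rank $n$, which I would prove by induction on $n$ with small base cases handled directly. Since every nonidentity $w \in \sS_n$ satisfies $w \preceq c$ for some $n$-cycle $c$, $\hat{P}_n$ is the union of the bounded intervals $\hat{I}_c := [e, c] \cup \{\hat{1}\}$ as $c$ runs over the $(n-1)!$ cycles of length $n$. Each $\hat{I}_c$ falls under the shellability base case of constructibility by virtue of the shelling of noncrossing partition lattices constructed in \cite{ABW}. Choosing a suitable linear order $c_1, c_2, \ldots, c_{(n-1)!}$ on the $n$-cycles and setting $U_k := \bigcup_{j \leq k} \hat{I}_{c_j}$, I would prove by induction on $k$ that each $U_k$ is constructible of rank $n$; the conclusion $U_{(n-1)!} = \hat{P}_n$ completes the outer induction.

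The heart of the argument, and the main obstacle, is the inductive step: showing that the intersection $U_{k-1} \cap \hat{I}_{c_k}$ is itself a constructible bounded graded poset of rank $n-1$. This intersection consists of $\hat{1}$ together with those $w \preceq c_k$ that also lie below some earlier $c_j$, and the cycle-deletion description of $\preceq$ recalled in the introduction allows one to analyze it in terms of the common sub-cycle structures of $c_k$ and the various $c_j$. The delicate combinatorial task is to choose the enumeration of the $n$-cycles so that at each stage this intersection decomposes constructibly into pieces that are either of strictly smaller rank (handled by the recursion built into the definition of constructibility) or are naturally assembled from absolute orders $P_r$ with $r < n$ on subsets of $\{1, 2, \ldots, n\}$ (handled by the outer induction on $n$). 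This step will likely need an auxiliary lemma ensuring that an appropriate product construction preserves constructibility. Once the intersections are known to be constructible, the induction closes and Theorem \ref{thm1} follows.
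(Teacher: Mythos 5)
Your framework — a poset analogue of constructibility, verified to imply homotopy Cohen-Macaulayness and a wedge-of-spheres type, then applied to $P_n$ — is the same scaffolding the paper builds (Definition \ref{def:wconstruct2}, Proposition \ref{prop:wc}, Corollary \ref{cor:wCM}). But the heart of the matter, which you flag yourself as ``the main obstacle,'' is precisely where your proposal departs from the paper and where it leaves a genuine gap. You propose to order the $(n-1)!$ maximal elements $c_1,\dots,c_{(n-1)!}$ of $P_n$ and accrete the intervals $[e,c_k]$ one at a time, proving by induction that each partial union is constructible. This is essentially a shelling-type strategy at the level of maximal intervals, and you give no candidate ordering and no argument that the shell $U_{k-1}\cap \hat I_{c_k}$ is constructible of the right rank. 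There is no reason to believe a one-cycle-at-a-time decomposition works: the intersection $U_{k-1}\cap[e,c_k]$ for a generic ordering is a complicated union of noncrossing-partition sublattices whose structure depends on all the earlier $c_j$, and no straightforward induction on $n$ or on $k$ controls it. Indeed the paper does \emph{not} take this route; the authors explicitly work with a coarser, parameterized decomposition that sidesteps the need to analyze single-cycle shells.

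Concretely, the paper replaces ``one $n$-cycle at a time'' by ``one letter of a common prefix at a time.'' It considers, for data $R=(\sigma,\tau_0,\dots,\tau_k)$, the ideal $\iI_n(R)$ generated by all permutations compatible with $R$, and proves by a triple induction (on $n$, on $n-k$, and on the number of unused letters $m$) that every such ideal is strongly constructible (Proposition \ref{prop:gen}). For $k\ge 1$ it peels off a free letter $j$ and writes $\iI_n(R)=\bigcup_i \iI_n(R_i)$ over the $k+1$ choices of which cycle receives $j$; for $k=0$ it extends the prefix $\sigma$ by one letter and writes $\iI_n(R)=\bigcup_j\iI_n(R_j)$. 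The crucial Lemma \ref{lem:tech} identifies the intersections of these batched ideals again as ideals of the form $\iI_n(\,\cdot\,)$, feeding the recursion. This parameterization is the real content of the proof; your proposal has the right outer shell but leaves the decomposition — the part that actually had to be discovered — as a hope (``The delicate combinatorial task is to choose the enumeration \dots''). Two further smaller issues: (a) you assert that Mayer--Vietoris plus a gluing lemma yields homotopy Cohen-Macaulayness, but that property is a statement about \emph{all} links, so you also need the analogue of Lemma \ref{lem:wc}(ii), that constructibility of the complex passes to links; (b) your definition fixes the intersection rank at exactly $n-1$, whereas the paper's allows rank $\ge d-1$, a flexibility actually used (e.g., when passing through the $m=0$ product case and when peeling off a letter into a new singleton cycle). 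Neither of these is fatal, but the missing decomposition is.
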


\begin{theorem}
The reduced Euler characteristic of the order complex $\Delta
(\bar{P}_n)$ satisfies
\begin{equation}
\sum_{n \ge 1} \ (-1)^n \, \tilde{\chi} (\Delta (\bar{P}_n)) \,
\frac{t^n}{n!} \ = \ 1 - C(t) \exp \left\{ -2t \, C(t) \right\},
\label{eq:mobius}
\end{equation}
where $C(t) = \frac{1}{2t} \, (1 - \sqrt{1-4t})$ is the ordinary
generating function for the Catalan numbers.
\label{thm2}
\end{theorem}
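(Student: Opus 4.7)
The plan is to express $\tilde\chi(\Delta(\bar{P}_n))$ as a M\"obius number in an augmented poset and then compute the resulting exponential generating function directly. Let $\hat{P}_n$ denote the bounded poset obtained from $P_n$ by adjoining a top element $\hat{1}$. By Philip Hall's theorem,
\[ \tilde\chi(\Delta(\bar{P}_n)) \, = \, \mu_{\hat{P}_n}(\hat{0}, \hat{1}) \, = \, - \sum_{w \in \sS_n} \mu_{P_n}(e, w), \]
so the task reduces to summing $\mu_{P_n}(e, w)$ over $w \in \sS_n$.

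To evaluate each summand I would exploit the cycle-wise product structure of lower intervals in $P_n$: the combinatorial description of $\preceq$ recalled in the introduction implies that if $w \in \sS_n$ has cycle type $\lambda = (\lambda_1, \ldots, \lambda_p)$, then the interval $[e, w]$ is isomorphic to the direct product $\nc_{\lambda_1} \times \cdots \times \nc_{\lambda_p}$ of noncrossing partition lattices. Since the M\"obius number of $\nc_k$ is the signed Catalan number $(-1)^{k-1} C_{k-1}$, the product formula for M\"obius functions yields
\[ \mu_{P_n}(e, w) \, = \, \prod_{i=1}^p (-1)^{\lambda_i - 1} C_{\lambda_i - 1}. \]
Summing over permutations grouped by cycle type, the exponential formula for cycle-indexed weights gives
\[ \Phi(t) \, := \, \sum_{n \ge 0} \frac{t^n}{n!} \sum_{w \in \sS_n} \mu_{P_n}(e, w) \, = \, \exp \! \left( \sum_{k \ge 1} \frac{(-1)^{k-1} C_{k-1}}{k} \, t^k \right). \]
Consequently, the left-hand side of the identity in Theorem \ref{thm2} equals $1 - \Phi(-t) = 1 - \exp(-G(t))$, where $G(t) := \int_0^t C(u)\, du$.

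The only nontrivial remaining step---which I would regard as the main obstacle---is to verify the closed-form identity
\[ \exp(-G(t)) \, = \, C(t) \, \exp(-2t\, C(t)), \]
equivalently $\log C(t) - 2t\, C(t) + G(t) = 0$. Both sides vanish at $t = 0$, and differentiation, combined with the defining functional equation $C = 1 + tC^2$ (which rearranges to $C'(1 - 2tC) = C^2$), shows that the two derivatives agree. Substituting back yields the formula of Theorem \ref{thm2}.
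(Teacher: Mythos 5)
Your argument is correct and the combinatorial core is identical to the paper's: both apply Philip Hall's theorem to reduce $\tilde\chi(\Delta(\bar{P}_n))$ to $-\sum_{w}\mu_{P_n}(e,w)$, both use that $[e,w]$ factors as a product of noncrossing partition lattices with M\"obius numbers $(-1)^{k-1}C_{k-1}$, and both invoke the exponential formula to obtain
\[
1-\sum_{n\ge 1}\tilde\chi(\Delta(\bar{P}_n))\,\frac{t^n}{n!}
\;=\;\exp\sum_{k\ge 1}\frac{(-1)^{k-1}C_{k-1}}{k}\,t^k .
\]
Where you diverge is only in the final algebraic step. The paper integrates $\sum C_{n-1}t^{n-1}=(1-\sqrt{1-4t})/(2t)$ explicitly, obtaining the closed form $1-\sqrt{1-4t}+\log(1+\sqrt{1-4t})-\log 2$, then exponentiates and simplifies. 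You instead leave the antiderivative as $G(t)=\int_0^t C(u)\,du$ and verify the identity $\exp(-G(t))=C(t)\exp(-2tC(t))$ by checking that $\log C - 2tC + G$ vanishes at $0$ and has derivative $C'/C - C - 2tC' = C'(1-2tC)/C - C = C^2/C - C = 0$, using $C=1+tC^2$. Your version avoids explicit antidifferentiation and the manipulation of nested radicals, so it is arguably slicker; the paper's explicit integration has the minor advantage of exhibiting the antiderivative in closed form along the way. Both are complete and correct.
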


Theorems \ref{thm1} and \ref{thm2} are proved in Sections
\ref{proof1} and \ref{proof2}, respectively. Theorem \ref{thm1} is
proved by showing that $P_n$ has a property which we call
strong constructibility. This notion is motivated by the
notion of constructibility for simplicial complexes \cite{Ho} (see
also \cite{St1}) and
is introduced and studied in Section \ref{sec:construct}. Section
\ref{sec:pre} discusses briefly some of the background from
topological combinatorics needed to understand Theorems \ref{thm1}
and \ref{thm2}.

\section{Preliminaries}
\label{sec:pre}

In this section we fix notation, terminology and conventions
related to simplicial complexes and partially ordered sets
(posets) and recall some fundamental definitions and facts. For
more information on these topics we refer the interested reader to
\cite{Bj2}, \cite[Chapter II]{St3}, \cite[Chapter 3]{St2} and
\cite{Wa}. Throughout this paper we use the notation $[n] = \{1,
2,\dots,n\}$.

All simplicial complexes and posets we will consider in this paper
are finite. All topological properties of an abstract simplicial
complex $\Delta$ we mention will refer to those of its geometric
realization $X$ (see \cite[Section 9]{Bj2}). For instance,
$\Delta$ is \emph{$k$-connected} if the homotopy groups $\pi_i (X,
x)$ vanish for all $0 \le i \le k$ and $x \in X$. The elements of
an abstract simplicial complex $\Delta$ are called \emph{faces}.
The \emph{link} of a face $F \in \Delta$ is defined as $\lk_\Delta
(F) = \{G \, \sm F: \, G \in \Delta, \, F \subseteq G\}$. The
complex $\Delta$ is said to be \emph{Cohen-Macaulay} (over $\ZZ$)
if $$\widetilde{H}_i \, (\lk_\Delta (F), \ZZ) = 0$$ for all $F \in
\Delta$ and $i < \dim \lk_\Delta (F)$ and \emph{homotopy
Cohen-Macaulay} if $\lk_\Delta (F)$ is $(\dim \lk_\Delta
(F)-1)$-connected for all $F \in \Delta$. A $d$-dimensional
simplicial complex $\Delta$ is said to be \emph{pure} if all
facets (faces which are maximal with respect to inclusion) of
$\Delta$ have dimension $d$. A pure $d$-dimensional simplicial
complex $\Delta$ is \emph{(pure) shellable} if there exists a
total ordering $G_1, G_2,\dots,G_m$ of the set of facets of
$\Delta$ such that for all $1 < i \le m$, the intersection of $G_1
\cup \, \cdots \, \cup G_{i-1}$ with $G_i$ is pure of dimension
$d-1$. We have the hierarchy of properties

\bigskip
\begin{tabular}{c}
\text{pure shellable} $\Rightarrow$ \text{homotopy Cohen-Macaulay}
$\Rightarrow$ \text{Cohen-Macaulay} $\Rightarrow$ \text{pure}
\end{tabular}

\bigskip
\noindent for a simplicial complex (in Section \ref{sec:construct}
we will insert constructibility between the first and second
property). Moreover, any $d$-dimensional (finite) homotopy
Cohen-Macaulay simplicial complex is $(d-1)$-connected and hence
homotopy equivalent to a wedge of $d$-dimensional spheres.

The order complex, denoted by $\Delta (P)$, of a poset $P$ is the abstract
simplicial complex with vertex set $P$ and faces the chains (totally
ordered subsets) of $P$. All topological properties of a poset $P$ we
mention will refer to those of (the geometric realization of) $\Delta (P)$.
The \emph{rank} of $P$ is defined as the dimension of $\Delta (P)$, in
other words as one less than the largest cardinality of a chain in $P$.
We say that $P$ is \emph{bounded} if it has a minimum and a maximum
element, \emph{graded} if $\Delta (P)$ is pure and \emph{pure shellable}
if so is $\Delta (P)$.
A subset $I$ of $P$ is called an (order) \emph{ideal} if we have
$x \in I$ whenever $x \le y$ holds in $P$ and $y \in I$.

\section{Constructible complexes and posets}
\label{sec:construct}

In this section we introduce the notion of strong constructibility
for partially ordered sets and discuss some of its features which
will be important for us. We will use the following variation of
the notion of constructibility for simplicial complexes \cite{Ho,
St1} \cite[Section 11.2]{Bj2}.
\begin{definition}
A $d$-dimensional simplicial complex $\Delta$ is constructible if
it is a simplex or it can be written as $\Delta = \Delta_1 \cup
\Delta_2$, where $\Delta_1, \Delta_2$ are $d$-dimensional
constructible simplicial complexes such that $\Delta_1 \cap
\Delta_2$ is constructible of dimension at least $d-1$.
\label{def:wconstruct}
\end{definition}
The classical notion of constructibility differs in that, in the
previous definition, the dimension of $\Delta_1 \cap \Delta_2$ has
to equal $d-1$. It is well-known that pure shellability implies
constructibility (in the classical sense). We do not know whether
our notion of constructibility coincides with the (possibly more
restrictive) classical notion. Observe, however, that
constructible simplicial complexes, in the sense of Definition
\ref{def:wconstruct}, are pure and that they enjoy the properties
listed in the following lemma and corollary.
\begin{lemma}
\begin{enumerate}
\itemsep=0pt \item[(i)] If $\Delta$ is a $d$-dimensional
constructible simplicial complex then $\Delta$ is $(d-1)$-connected.
\item[(ii)] If $\Delta$ is constructible then so is the link of
any face of $\Delta$.
\end{enumerate}
\label{lem:wc}
\end{lemma}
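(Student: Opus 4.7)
My plan is to prove both parts by simultaneous induction on the constructible decomposition of $\Delta$. In each case the base step, where $\Delta$ is a single $d$-simplex, is immediate: a simplex is contractible (so certainly $(d-1)$-connected), and the link of any of its faces is again a simplex, hence constructible. For the inductive step I write $\Delta = \Delta_1 \cup \Delta_2$ with $\Delta_1, \Delta_2$ both $d$-dimensional constructible and $\Delta_1 \cap \Delta_2$ constructible of some dimension $e \ge d-1$, and apply the inductive hypothesis to each of these three complexes.

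For part (i), the induction hypothesis gives that $\Delta_1, \Delta_2$ are $(d-1)$-connected and that $\Delta_1 \cap \Delta_2$ is $(e-1)$-connected, hence at least $(d-2)$-connected; note that $\Delta_1 \cap \Delta_2$ is nonempty whenever $d \ge 1$. I would then invoke the standard gluing principle: if $X = A \cup B$ is a union of two CW subcomplexes, both $k$-connected, whose intersection is nonempty and $(k-1)$-connected, then $X$ is $k$-connected. This follows from the van Kampen theorem together with the Mayer--Vietoris sequence; a convenient reference is \cite{Bj2}. Applying it with $k = d-1$ yields that $\Delta$ is $(d-1)$-connected.

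For part (ii), fix a face $F \in \Delta$. If $F$ belongs to exactly one of $\Delta_1, \Delta_2$, say $\Delta_1$, then every face of $\Delta$ containing $F$ must lie in $\Delta_1$, so $\lk_\Delta(F) = \lk_{\Delta_1}(F)$ is constructible by induction. If instead $F \in \Delta_1 \cap \Delta_2$, then
\[ \lk_\Delta(F) \ = \ \lk_{\Delta_1}(F) \cup \lk_{\Delta_2}(F) \quad \text{and} \quad \lk_{\Delta_1}(F) \cap \lk_{\Delta_2}(F) \ = \ \lk_{\Delta_1 \cap \Delta_2}(F), \]
and all three links are constructible by the inductive hypothesis. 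It remains to match the dimension condition of Definition \ref{def:wconstruct} for $\lk_\Delta (F)$: purity of each $\Delta_i$ (a consequence of constructibility, as remarked earlier) ensures that $\lk_{\Delta_i}(F)$ has dimension exactly $d - \dim F - 1$, while $\lk_{\Delta_1 \cap \Delta_2}(F)$ has dimension $e - \dim F - 1 \ge d - \dim F - 2$, which is exactly the bound required.

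The main obstacle I anticipate is the dimension bookkeeping in (ii): one must be careful that each link has its expected top dimension, which is where purity of constructible complexes is used. Part (i) is really a straightforward reduction to the standard topological gluing lemma, once the inductive hypotheses on $\Delta_1$, $\Delta_2$ and $\Delta_1 \cap \Delta_2$ are in place.
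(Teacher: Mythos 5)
Your proof is correct and follows essentially the same route as the paper's: part (i) is the gluing lemma from Bj\"orner's handbook chapter applied inductively, and part (ii) is the inductive decomposition of links via $\lk_{\Delta_1 \cup \Delta_2}(F) = \lk_{\Delta_1}(F) \cup \lk_{\Delta_2}(F)$ and $\lk_{\Delta_1}(F) \cap \lk_{\Delta_2}(F) = \lk_{\Delta_1 \cap \Delta_2}(F)$. You supply two details the paper leaves implicit, both correctly: the case split in (ii) according to whether $F$ lies in one or both of $\Delta_1, \Delta_2$, and the dimension count showing that $\lk_{\Delta_1 \cap \Delta_2}(F)$ has dimension at least $\dim \lk_\Delta(F) - 1$, which relies on the purity of constructible complexes.
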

\begin{proof}
Part (i) follows from the fact \cite[Lemma 10.3 (ii)]{Bj2} that if
$\Delta_1, \Delta_2$ are $k$-connected and $\Delta_1 \cap \Delta_2$
is $(k-1)$-connected then $\Delta_1 \cup \Delta_2$ is $k$-connected.
Part (ii) follows from the observation that if $F$ is a face of
$\Delta_1 \cup \Delta_2$ then $\lk_{\Delta_1 \cup \Delta_2} (F) =
\lk_{\Delta_1} (F) \cup \lk_{\Delta_2} (F)$ and
$\lk_{\Delta_1} (F) \cap \lk_{\Delta_2} (F) = \lk_{\Delta_1 \cap
\Delta_2} (F)$.
\end{proof}
\begin{corollary}
If $\Delta$ is a constructible simplicial complex then $\Delta$ is
homotopy Cohen-Macaulay. \label{cor:wCM}
\end{corollary}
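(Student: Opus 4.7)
The plan is to combine the two parts of Lemma \ref{lem:wc} with the observation (noted just before the lemma) that constructible simplicial complexes in the sense of Definition \ref{def:wconstruct} are pure. Recall that by definition, $\Delta$ is homotopy Cohen-Macaulay if, for every face $F \in \Delta$, the link $\lk_\Delta(F)$ is $(\dim \lk_\Delta(F)-1)$-connected.

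So I would proceed as follows. Fix an arbitrary face $F \in \Delta$. By Lemma \ref{lem:wc}(ii), the link $\lk_\Delta(F)$ is itself constructible, hence pure; let $d' = \dim \lk_\Delta(F)$. Then Lemma \ref{lem:wc}(i), applied to the $d'$-dimensional constructible complex $\lk_\Delta(F)$, yields that $\lk_\Delta(F)$ is $(d'-1)$-connected. Since $F$ was arbitrary, this is exactly the homotopy Cohen-Macaulay property.

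There is no real obstacle here; the corollary is just an immediate packaging of Lemma \ref{lem:wc} together with the purity of constructible complexes. The only point worth emphasizing in the write-up is that purity is needed to identify the dimension $d'$ appearing in the connectivity conclusion of part (i) with the dimension of $\lk_\Delta(F)$ in the definition of homotopy Cohen-Macaulayness.
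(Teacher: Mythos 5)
Your proposal is correct and is essentially the paper's own proof, which simply cites Lemma \ref{lem:wc}; you have just spelled out the obvious combination of parts (i) and (ii) together with purity.
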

\begin{proof}
This follows from Lemma \ref{lem:wc}.
\end{proof}
\begin{lemma}
Let $\Delta_1, \Delta_2,\dots,\Delta_k$ be $d$-dimensional
constructible simplicial complexes.
\begin{enumerate}
\itemsep=0pt \item[(i)] If the intersection of any two or more of
$\Delta_1, \Delta_2,\dots,\Delta_k$ is constructible of
dimension $d$, then their union is also constructible.
\item[(ii)] If the intersection of any two or more of $\Delta_1,
\Delta_2,\dots,\Delta_k$ is constructible of dimension
$d-1$, then their union is also constructible.
\end{enumerate}
\label{lem:wconstruct}
\end{lemma}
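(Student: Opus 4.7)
The plan is to prove both parts by induction on $k$, with part (ii) invoking part (i) as a sub-ingredient. The base case $k=1$ is vacuous and $k=2$ is immediate from Definition \ref{def:wconstruct} (noting that for (i) we use that dimension $d$ qualifies as ``at least $d-1$''). For the inductive step I would split off the last complex and write
\[
\Delta \ = \ \Delta_1 \cup \cdots \cup \Delta_k \ = \ A \cup \Delta_k, \qquad A := \Delta_1 \cup \cdots \cup \Delta_{k-1},
\]
so that $\Delta$ is exhibited as a union of two $d$-dimensional complexes, with
\[
A \cap \Delta_k \ = \ \bigcup_{i=1}^{k-1} (\Delta_i \cap \Delta_k).
\]
To apply Definition \ref{def:wconstruct} to this splitting, I must verify that $A$ is constructible of dimension $d$ and that $A \cap \Delta_k$ is constructible of dimension at least $d-1$.

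For part (i), the hypothesis on intersections of the $\Delta_i$'s of course still holds for $\Delta_1, \ldots, \Delta_{k-1}$, so $A$ is constructible of dimension $d$ by induction on $k$. For $A \cap \Delta_k$, I view $\Delta_i \cap \Delta_k$ ($i = 1, \ldots, k-1$) as $d$-dimensional constructible complexes and observe that any intersection of $m \ge 2$ of them equals $\Delta_{i_1} \cap \cdots \cap \Delta_{i_m} \cap \Delta_k$, which is an intersection of $m+1 \ge 3$ of the original $\Delta_i$'s and therefore constructible of dimension $d$ by hypothesis. Hence the inductive hypothesis of (i) applies to the family $\{\Delta_i \cap \Delta_k\}_{i < k}$, yielding that $A \cap \Delta_k$ is constructible of dimension $d$, which is certainly at least $d-1$, as needed.

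For part (ii) the dimensions shift: $A$ is again constructible of dimension $d$ by the inductive hypothesis on (ii), but now each $\Delta_i \cap \Delta_k$ is only constructible of dimension $d-1$. The intersections $\Delta_{i_1} \cap \cdots \cap \Delta_{i_m} \cap \Delta_k$ for $m \ge 2$ are, by the hypothesis of (ii), constructible of dimension $d-1$. Thus the family $\{\Delta_i \cap \Delta_k\}_{i<k}$ is a collection of $(d-1)$-dimensional constructible complexes whose pairwise and higher intersections are constructible of the same top dimension $d-1$. This is precisely the situation of part (i) applied in dimension $d-1$, so that $A \cap \Delta_k$ is constructible of dimension $d-1$, completing the verification.

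The main obstacle I anticipate is not a deep argument but rather the bookkeeping around what ``intersection of two or more'' means when one passes from the original family to the family $\{\Delta_i \cap \Delta_k\}$: one must notice that intersecting $m \ge 2$ members of the new family always produces an intersection of $m+1 \ge 3$ members of the old family, so the hypothesis is automatically inherited. Once this observation is in hand, the induction proceeds smoothly and the two statements are naturally intertwined, with (i) in dimension $d-1$ serving as the engine that drives (ii) in dimension $d$.
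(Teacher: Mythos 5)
Your proof is correct and follows essentially the same route as the paper: induction on $k$, splitting off $\Delta_k$, observing that $(\Delta_1 \cup \cdots \cup \Delta_{k-1}) \cap \Delta_k$ is a union of complexes $\Delta_i \cap \Delta_k$ that inherit the hypothesis of part (i), and then applying part (i) (in dimension $d$ or $d-1$ respectively) to that subfamily before assembling with the definition. The bookkeeping point you flag --- that an $m$-fold intersection of the $\Delta_i \cap \Delta_k$ is an $(m+1)$-fold intersection of the original family --- is exactly the justification the paper leaves implicit in the phrase ``satisfy the hypothesis of part (i).''
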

\begin{proof}
We proceed by induction on $k$. The case $k=1$ is trivial and the
case $k=2$ is clear by definition, so we assume that $k \ge 3$.
The complexes $\Delta_1 \cap \Delta_k,\dots,\Delta_{k-1} \cap
\Delta_k$ have dimension $d$ or $d-1$ in the cases of parts (i)
and (ii), respectively, and satisfy the hypothesis of part (i).
Hence, by our induction hypothesis, their union $(\Delta_1 \cup
\cdots \cup \Delta_{k-1}) \cap \Delta_k$ is constructible of
dimension $d$ or $d-1$, respectively. Since, by induction,
$\Delta_1 \cup \cdots \cup \Delta_{k-1}$ is constructible of
dimension $d$ and, by assumption, so is $\Delta_k$, it follows
that $\Delta_1 \cup \cdots \cup \Delta_k$ is constructible as
well.
\end{proof}

We now consider the class of finite posets with a minimum element
and define the notion of strong constructibility as follows.
\begin{definition}
A finite poset $P$ of rank $d$ with a minimum element is strongly
constructible if it is bounded and pure shellable or it can be
written as a union $P = I_1 \cup I_2$ of two strongly
constructible proper ideals $I_1, I_2$ of rank $d$, such that $I_1
\cap I_2$ is strongly constructible of rank at least $d-1$.
\label{def:wconstruct2}
\end{definition}

Note that any strongly constructible poset is graded.
\begin{proposition}
The order complex of any strongly constructible poset is
constructible. \label{prop:wc}
\end{proposition}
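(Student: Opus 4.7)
The plan is to induct on $|P|$. For the base case, suppose $P$ is bounded and pure shellable. Then by definition $\Delta(P)$ is pure shellable, and since pure shellability implies (classical, hence also weak) constructibility, $\Delta(P)$ is constructible.

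For the inductive step, suppose $P = I_1 \cup I_2$ where $I_1, I_2$ are strongly constructible proper ideals of rank $d$ and $I_1 \cap I_2$ is strongly constructible of rank at least $d-1$. Since $I_1$ and $I_2$ are proper, $|I_j| < |P|$ for $j = 1, 2$, and similarly $|I_1 \cap I_2| < |P|$, so by the inductive hypothesis, $\Delta(I_1)$, $\Delta(I_2)$ are constructible of dimension $d$ and $\Delta(I_1 \cap I_2)$ is constructible of dimension at least $d-1$.

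The key observation is that for order ideals $I_1, I_2$ of a poset, one has
\[ \Delta(I_1 \cup I_2) = \Delta(I_1) \cup \Delta(I_2) \quad \text{and} \quad \Delta(I_1 \cap I_2) = \Delta(I_1) \cap \Delta(I_2). \]
The second equality is immediate. For the first, the containment $\supseteq$ is clear; for $\subseteq$, let $C$ be a chain in $I_1 \cup I_2$ with maximum element $x$. Then $x$ lies in $I_j$ for some $j \in \{1,2\}$, and since $I_j$ is an ideal and every element of $C$ is $\le x$, we conclude $C \subseteq I_j$, so $C$ is a face of $\Delta(I_j)$. Hence $\Delta(P) = \Delta(I_1) \cup \Delta(I_2)$, with the intersection $\Delta(I_1) \cap \Delta(I_2) = \Delta(I_1 \cap I_2)$ constructible of dimension at least $d-1$. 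Applying Definition \ref{def:wconstruct}, $\Delta(P)$ is constructible of dimension $d$.

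The argument is essentially a formal translation between the poset-level decomposition and the simplicial-complex-level decomposition. The only point that requires any care is verifying that a chain in a union of two ideals already lies in one of them, which rests on the defining property of an ideal. No serious obstacle is expected; the main work was done in setting up Definition \ref{def:wconstruct2} to mirror Definition \ref{def:wconstruct} on the level of order complexes.
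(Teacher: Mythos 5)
Your proof is correct and follows the same approach as the paper's: induct on $|P|$, observe that for ideals $\Delta(I_1\cup I_2)=\Delta(I_1)\cup\Delta(I_2)$ and $\Delta(I_1\cap I_2)=\Delta(I_1)\cap\Delta(I_2)$, and invoke the inductive hypothesis together with Definition~\ref{def:wconstruct}. The only difference is that you spell out the verification that a chain in $I_1\cup I_2$ lies entirely in one $I_j$ (via the maximum element and the ideal property), which the paper simply labels ``clearly.''
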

\begin{proof}
Let $P$ be a strongly constructible poset of rank $d$. To show
that $\Delta (P)$ is constructible we will use induction on the
cardinality of $P$. If $P$ is pure shellable then $\Delta (P)$ is
pure shellable and hence constructible. Otherwise $P$ is the union
of two strongly constructible proper ideals $I_1, I_2$ of rank
$d$, such that $I_1 \cap I_2$ is strongly constructible of rank at
least $d-1$. Clearly we have $\Delta (P) = \Delta (I_1) \cup
\Delta (I_2)$ and $\Delta (I_1) \cap \Delta (I_2) = \Delta (I_1
\cap I_2)$. Since, by the induction hypothesis, $\Delta (I_1)$ and
$\Delta (I_2)$ are constructible of dimension $d$ and $\Delta (I_1
\cap I_2)$ is constructible of dimension at least $d-1$, it
follows that $\Delta (P)$ is constructible as well. This completes
the induction and the proof of the proposition.
\end{proof}

The next lemma asserts that our notion of strong constructibility
for posets behaves well under direct products.
\begin{lemma}
The direct product of two strongly constructible posets is
strongly constructible. \label{lem:product}
\end{lemma}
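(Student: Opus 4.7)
The plan is to argue by induction on $|P| + |Q|$, where $P$ and $Q$ are the two strongly constructible posets in question, of ranks $d_P$ and $d_Q$ respectively. The base case, in which $P$ and $Q$ are both singletons, is immediate since then $P \times Q$ is also a singleton.

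For the inductive step I would distinguish two cases. First, if both $P$ and $Q$ are bounded and pure shellable, then $P \times Q$ is bounded (with componentwise minimum and maximum), and one invokes the classical fact that the direct product of two bounded pure shellable posets is again bounded and pure shellable, which can be proved for instance by combining EL-labelings of $P$ and $Q$ into one of $P \times Q$; this shows that $P \times Q$ is strongly constructible directly from Definition \ref{def:wconstruct2}.

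Otherwise, at least one of $P, Q$, say $P$, fails to be bounded and pure shellable, so by Definition \ref{def:wconstruct2} one may write $P = I_1 \cup I_2$, where $I_1, I_2$ are strongly constructible proper ideals of $P$ of rank $d_P$ and $I_1 \cap I_2$ is strongly constructible of rank at least $d_P - 1$. Setting $J_i := I_i \times Q$, one verifies directly that each $J_i$ is an order ideal of $P \times Q$ (using downward closure of $I_i$ in $P$ and the componentwise order), that $P \times Q = J_1 \cup J_2$, and that $J_1 \cap J_2 = (I_1 \cap I_2) \times Q$. Since rank is additive under direct products, each $J_i$ is a proper ideal of $P \times Q$ of rank $d_P + d_Q$, and $J_1 \cap J_2$ has rank at least $d_P + d_Q - 1$. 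Because $|I_1|, |I_2|, |I_1 \cap I_2| < |P|$, the induction hypothesis applied to the pairs $(I_1, Q)$, $(I_2, Q)$ and $(I_1 \cap I_2, Q)$ yields that $J_1$, $J_2$ and $J_1 \cap J_2$ are strongly constructible, and the displayed decomposition then witnesses strong constructibility of $P \times Q$.

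The main obstacle in this plan is the base case, which depends on the classical fact that the direct product of bounded pure shellable posets is again bounded pure shellable; the inductive step itself amounts to a formal manipulation, enabled by the observation that taking the direct product with $Q$ commutes with unions and intersections of order ideals of $P$ and shifts ranks by $d_Q$.
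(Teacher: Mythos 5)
Your proof is correct and follows essentially the same route as the paper's: induction on $|P|+|Q|$, using the Bj\"orner--Wachs result that products of bounded pure shellable posets are bounded pure shellable for the base case, and reducing via the decomposition $P \times Q = (I_1 \times Q) \cup (I_2 \times Q)$ with intersection $(I_1 \cap I_2) \times Q$ in the inductive step. The separate treatment of the singleton case is harmless but redundant, since it is already covered by the bounded pure shellable case.
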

\begin{proof}
Let $P, Q$ be two strongly constructible posets of ranks $d$ and
$e$, respectively. We proceed by induction on the sum of the
cardinalities of $P$ and $Q$. If $P$ and $Q$ are both bounded and
pure shellable then their direct product $P \times Q$ is also
(bounded and) pure shellable \cite[Theorem 8.3]{BW} and hence
strongly constructible. If not then one of them, say $P$, can be
written as a union $P = I_1 \cup I_2$ of two strongly
constructible proper ideals $I_1, I_2$ of rank $d$, such that $I_1
\cap I_2$ is strongly constructible of rank at least $d-1$. Then
$P \times Q$ is the union of its proper ideals $I_1 \times Q$ and
$I_2 \times Q$, each of rank $d+e$. By our induction hypothesis,
these products are strongly constructible and so is their
intersection $(I_1 \cap I_2) \times Q$, which has rank at least
$d+e-1$. As a result, $P \times Q$ is strongly constructible as
well.
\end{proof}

The proof of the following lemma is analogous to that of Lemma
\ref{lem:wconstruct} (ii) and is omitted.
\begin{lemma}
Let $P$ be a finite poset of rank $d$ with a minimum element. If
$P$ is the union of strongly constructible ideals $I_1,
I_2,\dots,I_k$ of $P$ of rank $d$ and the intersection of any two
or more of these ideals is  strongly constructible of rank $d-1$,
then $P$ is strongly constructible. \qed \label{lem:wconstruct2}
\end{lemma}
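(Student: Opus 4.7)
The plan is to mimic the induction used in the proof of Lemma~\ref{lem:wconstruct}(ii), but with an added twist: I will need to establish along the way a companion statement analogous to Lemma~\ref{lem:wconstruct}(i) for posets. Specifically, I would prove by simultaneous induction on $k$ the following two claims: (A) if $I_1, \dots, I_k$ are strongly constructible ideals of $P$ of rank $d$ whose intersections (of any two or more) are strongly constructible of rank $d$, then $I_1 \cup \cdots \cup I_k$ is strongly constructible of rank $d$; and (B) the statement of the lemma itself, where the intersections have rank $d-1$.

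The cases $k = 1$ (trivial) and $k = 2$ (immediate from Definition~\ref{def:wconstruct2}, since the clause ``at least $d-1$'' covers both possibilities for the rank of the intersection) handle the base. For the inductive step of (B) with $k \geq 3$, set $P' = I_1 \cup \cdots \cup I_{k-1}$; by the inductive hypothesis for (B) applied to $I_1, \dots, I_{k-1}$, this $P'$ is strongly constructible of rank $d$. Now consider
\[ P' \cap I_k \ = \ \bigcup_{j=1}^{k-1}\, (I_j \cap I_k). \]
Each $I_j \cap I_k$ is strongly constructible of rank $d-1$ by hypothesis, and any intersection of two or more of them equals $I_{j_1} \cap \cdots \cap I_{j_s} \cap I_k$ with $s \geq 2$, which is again strongly constructible of rank $d-1$ by hypothesis. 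Hence claim (A), applied inductively to these $k-1$ ideals within the ambient poset $I_k$, yields that $P' \cap I_k$ is strongly constructible of rank $d-1$. Since $P = P' \cup I_k$ with $P'$ and $I_k$ strongly constructible of rank $d$ and their intersection strongly constructible of rank $d-1$, Definition~\ref{def:wconstruct2} finishes the step.

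The inductive step of (A) is entirely analogous: every occurrence of rank $d-1$ in the argument above is replaced by rank $d$, and the ``at least $d-1$'' clause in the definition again absorbs the difference. A few degenerate situations must be disposed of along the way (for instance, when some $I_j$ equals $P$, or when $I_k \subseteq I_1 \cup \cdots \cup I_{k-1}$ so that $P = P'$); in each such case one simply drops the redundant ideal and invokes induction on a smaller family, so these do not pose a genuine difficulty.

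The main obstacle is precisely the need to pair (B) with its same-rank companion (A): because Definition~\ref{def:wconstruct2} permits splitting a poset only into two proper ideals, a direct $k$-fold decomposition is not available, and to control the intersection $P' \cap I_k$ one must reduce the problem to a union of ideals of rank $d-1$ whose mutual intersections again have rank $d-1$, a situation that (B) alone does not cover. Once this dual pair of inductive claims is in place, the argument runs in lockstep with the proof of Lemma~\ref{lem:wconstruct}.
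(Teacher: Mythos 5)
Your proposal is correct and matches the paper's intended (omitted) proof, which the authors describe as analogous to that of Lemma~\ref{lem:wconstruct}(ii). In particular, you correctly recognize that the induction requires the same-rank companion claim (A) — the poset analogue of Lemma~\ref{lem:wconstruct}(i) — to handle the union $P' \cap I_k = \bigcup_{j<k}(I_j \cap I_k)$, exactly as part (i) is invoked in the paper's proof of part (ii).
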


\section{Proof of Theorem \ref{thm1}}
\label{proof1}

In this section we prove Theorem \ref{thm1} by showing that $P_n$
is strongly constructible. We will in fact prove a more general
statement. For that reason, we introduce the following notation.
Let $\tau_0, \tau_1,\dots,\tau_k$ be pairwise disjoint subsets of
$[n]$, such that $\tau_1,\dots,\tau_k$ are nonempty. Let also
$\sigma$ be a nonempty sequence of distinct elements of $[n]$,
none of which belongs to any of the sets $\tau_i$. We set $R =
(\sigma, \tau_0,\dots,\tau_k)$ and denote by $\sS_n (R)$ the set
of permutations $w \in \sS_n$ which have exactly $k+1$ cycles
$c_0, c_1,\dots,c_k$ in their cycle decomposition, such that
\begin{enumerate}
\itemsep=0pt
\item[(a)]
the elements of $\sigma$ appear consecutively in the cycle $c_0$ in the
order in which they appear in $\sigma$ and
\item[(b)]
the elements of $\tau_i$ appear in the cycle $c_i$ for $0 \le i \le k$.
\end{enumerate}
\begin{example}
{\rm Suppose $k=0$ and $\sigma = (1, 2,\dots,r)$. Then $\sS_n (R)$ is
the set of cycles $w \in \sS_n$ of length $n$ for which $w(i) = i+1$
for $1 \le i \le r-1$. In particular, if $r=1$ then $\sS_n (R)$ is the
set of all maximal elements of $P_n$.}
\label{ex:k=0}
\end{example}

The following proposition is the main result in this section.
\begin{proposition}
If $R$ is as above then the order ideal of $P_n$ generated by
$\sS_n (R)$ is strongly constructible. \label{prop:gen}
\end{proposition}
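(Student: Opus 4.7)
The plan is to prove Proposition \ref{prop:gen} by a double induction: the outer induction is on $n$, and within a fixed $n$ the inner induction is on $m := |\tau_0| + |F|$, where $F := [n] \sm (\sigma \cup \tau_0 \cup \tau_1 \cup \cdots \cup \tau_k)$ is the set of ``free'' elements. Write $J(R)$ for the order ideal of $P_n$ generated by $\sS_n(R)$; since every $w \in \sS_n(R)$ has exactly $k+1$ cycles, $J(R)$ has rank $d = n - k - 1$.

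In the base case $m = 0$, every $w \in \sS_n(R)$ has $c_0$ equal to the fixed cycle $c_\sigma$ determined by $\sigma$, while each $c_i$ for $i \geq 1$ is an arbitrary cyclic ordering of $\tau_i$. Thus $J(R)$ is isomorphic, as a poset, to the direct product $[e, c_\sigma] \times P_{|\tau_1|} \times \cdots \times P_{|\tau_k|}$. The first factor is a noncrossing partition lattice, pure shellable (hence strongly constructible) by \cite{ABW}; each remaining factor equals $J(R')$ for $R' = ((1), \emptyset)$ in $\sS_{|\tau_i|}$ by Example \ref{ex:k=0} and is strongly constructible by the outer induction, using $|\tau_i| < n$. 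Lemma \ref{lem:product} then yields strong constructibility of the product.

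For the inductive step ($m \geq 1$), let $s$ be the last element of $\sigma$, and for each $t \in \tau_0 \cup F$ define $R_t := (\sigma \cdot t,\, \tau_0 \sm \{t\},\, \tau_1, \ldots, \tau_k)$, where $\sigma \cdot t$ is $\sigma$ with $t$ appended at the end. One checks that $\sS_n(R_t) = \{w \in \sS_n(R) : w(s) = t\}$ and that the inner-induction parameter for $R_t$ is $m - 1$, so $I_t := J(R_t)$ is strongly constructible of rank $d$. When $\tau_0 \neq \emptyset$ the identity $J(R) = \bigcup_{t \in \tau_0 \cup F} I_t$ holds because every $w \in \sS_n(R)$ satisfies $w(s) \in \tau_0 \cup F$. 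When $\tau_0 = \emptyset$ there is also the possibility $c_0(w) = \sigma$; the sub-ideal $J_A$ generated by such $w$ decomposes as the direct product $[e, c_\sigma] \times J(\bar R)$ with $\bar R$ the induced data on $[n] \sm \sigma$, and by the outer induction and Lemma \ref{lem:product} it is strongly constructible of rank $d$. In both cases $J(R)$ is realized as a union of strongly constructible ideals of rank $d$.

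The main obstacle is verifying the intersection hypothesis of Lemma \ref{lem:wconstruct2}, namely that the intersection of any sub-collection of these ideals is strongly constructible of rank at least $d - 1$. The guiding intuition is that $u \in I_t \cap I_{t'}$ requires the cycle of $u$ through $s$ to be a common sub-cycle of some $c_0$ with $t$ as successor of $s$ and some $c_0'$ with $t'$ as successor of $s$; this forces the successor of $s$ inside the cycle of $u$ through $s$ to lie beyond both $t$ and $t'$, effectively inserting an extra ``break'' in $c_0$ and expressing $I_t \cap I_{t'}$ as a union of ideals $J(R'')$ for parameters $R''$ having one more cycle than $R$ --- hence rank $d - 1$ --- and strictly smaller $m$. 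The secondary induction then handles each such $J(R'')$, and the same cycle-surgery analysis treats higher-order intersections as well as intersections involving $J_A$. Once this step is in place, Lemma \ref{lem:wconstruct2} completes the induction.
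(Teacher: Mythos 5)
Your overall framework—order ideal generated by $\sS_n(R)$, outer induction on $n$, inner induction on a parameter counting ``unfixed'' data, decomposing $J(R)$ as a union of ideals $I_t$ obtained by appending one element $t$ to $\sigma$, and closing via Lemma \ref{lem:wconstruct2}—is sound and close in spirit to the paper's. The base case (direct product of a noncrossing partition lattice with copies of $P_{|\tau_i|}$, Lemma \ref{lem:product}) and the bookkeeping of the $m$-parameter are correct. There are, however, two genuine issues.

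First, the technical heart of the argument is precisely the step you defer to ``guiding intuition'': verifying that the intersection of two or more of the ideals $I_t$ (and, when $\tau_0=\varnothing$, intersections with $J_A$) is strongly constructible of rank exactly $d-1$. This is not at all a routine check. Even in the special case $k=0$, the intersection $\bigcap_{j\in J} I_j$ is not a single ideal of the form $J(R'')$; it is a \emph{union} of such ideals (one with the entire set $J$ forming a new cycle, and for each $j\in J$ one in which $j$ is appended to $\sigma$ and $J\sm\{j\}$ forms a new cycle). Establishing this identity requires a nontrivial structural lemma about the absolute order (the paper's Lemma \ref{lem:tech}, which in turn rests on a cycle-merging observation, Remark \ref{lem:tech0}). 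One must then run a \emph{further} induction on $|J|$ to show this union is itself strongly constructible, computing the lower-order intersections explicitly and checking they land in rank $d-2$. None of this appears in your proposal; the sentence ``the secondary induction then handles each such $J(R'')$, and the same cycle-surgery analysis treats higher-order intersections as well as intersections involving $J_A$'' is a promissory note, not an argument. Without the identity and the inner induction, Lemma \ref{lem:wconstruct2} cannot be invoked.

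Second, your decision to use the ``append to $\sigma$'' decomposition uniformly, even when $k\ge 1$ and there are free elements, is a genuine departure from the paper—and it makes the missing step strictly harder, not easier. The paper treats $k\ge 1$ with a much simpler decomposition (placing a free element $j$ into one of the sets $\tau_i$), for which the intersection of any two or more of the resulting ideals is a \emph{single} ideal $\iI_n(S)$ with $j$ in its own new cycle, with no analogue of Lemma \ref{lem:tech} required. The paper reserves the harder ``append to $\sigma$'' decomposition for the $k=0$ case, precisely because there its intersections can be analyzed using Lemma \ref{lem:tech}, which is stated only for a full $n$-cycle. If you insist on appending to $\sigma$ when $k\ge 1$, you would need to prove a generalization of Lemma \ref{lem:tech} to permutations with several prescribed cycles; you neither state nor prove such a generalization. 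I would recommend adopting the paper's two-regime decomposition, or at minimum stating and proving the structural lemma your intersection analysis tacitly relies on.
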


The next remark will be used in the proof of the following
technical lemma, which will be used in turn in the proof of
Proposition \ref{prop:gen}.
\begin{remark}
{\rm Suppose that $w \preceq c$ holds in $\sS_n$, where $c = (a_1
\ a_2 \ \cdots \ a_n)$ is a cycle of length $n$, and let $1 \le p
\le n$. Suppose further that $w$ has a cycle containing no $a_i$
with $1 \le i \le p$. Then there exists a permutation in $\sS_n$
which has exactly two cycles $u, v$ in its cycle decomposition,
such that $u(a_i) = a_{i+1}$ for $1 \le i \le p-1$, the elements
appearing in $u$ are exactly the elements which appear in those
cycles of $w$ containing $a_1, a_2,\dots,a_p$, and $w \preceq uv$.
This statement follows easily from the description of the absolute
order on $\sS_n$ given in Section \ref{intro}. One constructs the
cycle $u$ by merging appropriately the cycles of $w$ in which the
elements $a_1, a_2,\dots,a_p$ appear. The cycle $v$ can be
constructed by merging appropriately the remaining cycles of $w$.
The details are left to the reader. \qed} \label{lem:tech0}
\end{remark}
\begin{lemma}
Let $1 \le r \le n-2$ and $J$ be a subset of $\{r+1,\dots,n\}$ with
at least two elements. Suppose that
$w \in \sS_n$ is such that for all $j \in J$ there exists a cycle
$c$ in $\sS_n$ of length $n$ satisfying $c(i) = i+1$ for $1 \le i
\le r-1$, $c(r) = j$ and $w \preceq c$.
\begin{enumerate}
\itemsep=0pt
\item[(i)] There exists at most one $j \in J$ such that $i$ and $j$
are elements of the same cycle in the cycle decomposition for $w$ for
some $1 \le i \le r$.
\item[(ii)] There exists a permutation in $\sS_n$ which has exactly two
cycles $u, v$ in its cycle decomposition such that $w \preceq uv$, $u(i)
= i+1$ for $1 \le i \le r-1$ and one of the following holds: {\rm (a)}
all elements of $J$ appear in $v$, or {\rm (b)} there exists $j \in J$
with $u(r) = j$ and all other elements of $J$ appear in $v$.
\end{enumerate}
\label{lem:tech}
\end{lemma}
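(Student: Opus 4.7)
The plan is to prove part (i) by contradiction, using the combinatorial description of $\preceq$ on $\sS_n$ recalled in Section \ref{intro} (subcycle plus noncrossing), and then to deduce part (ii) by feeding the output of part (i) into Remark \ref{lem:tech0} applied to a carefully chosen $c_j$. Throughout, write $c_j = (1, 2, \ldots, r, j, \ldots)$ for the full-length cycles of $\sS_n$ supplied by the hypothesis, so that $w \preceq c_j$ for every $j \in J$; equivalently, every cycle of $w$ is obtained from each $c_j$ by deleting elements, and any two cycles of $w$ are noncrossing with respect to each $c_j$.

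For part (i), I would suppose for contradiction that there exist distinct $j_1, j_2 \in J$ and $i_1, i_2 \in \{1, \ldots, r\}$ with $i_\alpha$ in the same cycle of $w$ as $j_\alpha$ for $\alpha = 1, 2$, and split into two cases. If all four elements lie in a single cycle $A$ of $w$, the cyclic order that $A$ induces on $\{i_1, i_2, j_1, j_2\}$ (or on $\{i_1, j_1, j_2\}$ when $i_1 = i_2$) must agree with that induced both by $c_{j_1}$ and by $c_{j_2}$; but $j_1$ is the successor of $r$ in $c_{j_1}$ while $j_2$ is the successor of $r$ in $c_{j_2}$, so the two induced orders are distinct, contradiction. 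If instead $i_1, j_1$ lie in a cycle $A$ and $i_2, j_2$ lie in a different cycle $B$ of $w$, I would examine the cyclic order of $\{i_1, i_2, j_1, j_2\}$ in each of $c_{j_1}, c_{j_2}$ and observe that, after a possible relabeling, the pattern is $ABAB$ in $c_{j_1}$ when $i_1 < i_2$ and in $c_{j_2}$ when $i_1 > i_2$. In either subcase the resulting $4$-element subcycle is a witness of the form forbidden by the definition of noncrossing, producing a crossing of $A$ and $B$ with respect to one of $c_{j_1}, c_{j_2}$, again a contradiction.

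For part (ii), part (i) provides at most one $j^* \in J$ sharing a cycle of $w$ with some element of $\{1, \ldots, r\}$. If no such $j^*$ exists, I would pick any $j \in J$; the cycle of $w$ containing $j$ is then disjoint from $\{1, \ldots, r\}$, so Remark \ref{lem:tech0} applied to $c = c_j$ with $p = r$ produces $u, v$ with $u(i) = i+1$ for $1 \le i \le r-1$, $w \preceq uv$, and $u$ equal to the union of those cycles of $w$ meeting $\{1, \ldots, r\}$. Since in this case no element of $J$ lies in any such cycle, $J \subseteq v$, giving alternative (a). If the unique $j^*$ exists, I would pick $j \in J \setminus \{j^*\}$ (available because $|J| \ge 2$); by part (i) the cycle of $w$ containing $j$ avoids $\{1, \ldots, r\}$, so Remark \ref{lem:tech0} applied with $c = c_{j^*}$ and $p = r$ produces an analogous $u, v$. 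Now $u$ contains $j^*$ (since $j^*$ shares a cycle of $w$ with $\{1, \ldots, r\}$) but, by part (i), no other element of $J$. Because the $u$ constructed by the remark is a subcycle of $c_{j^*} = (1, 2, \ldots, r, j^*, \ldots)$ and contains both $r$ and $j^* = c_{j^*}(r)$, the next element after $r$ in $u$ is $j^*$, so $u(r) = j^*$ and alternative (b) holds.

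The main obstacle is the bookkeeping in part (i): one must carefully enumerate the subcases ($A = B$ versus $A \ne B$, and within the latter $i_1 < i_2$ versus $i_1 > i_2$) and in each one identify either an incompatibility between the cyclic orders on a single cycle of $w$ induced by $c_{j_1}$ and $c_{j_2}$, or a $4$-element alternating pattern furnishing the required crossing. Once part (i) is established, part (ii) is essentially an unpacking of Remark \ref{lem:tech0}, with the only subtlety being the identification $u(r) = j^*$ via the subcycle structure of $u$ inside $c_{j^*}$.
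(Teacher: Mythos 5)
Your proof is correct and follows essentially the same strategy as the paper: the paper declares part (i) an easy consequence of the subcycle/noncrossing characterization of $\preceq$ (precisely the case analysis you spell out), and for part (ii) it likewise applies Remark \ref{lem:tech0} guided by part (i). The only small deviation is in case (b): the paper invokes the Remark with $p=r+1$ so that $u(r)=j^*$ is part of its stated conclusion, whereas you use $p=r$ and supply the extra (valid, though not explicitly stated in the Remark) observation that $u$ may be taken to be a subcycle of $c_{j^*}$ to deduce $u(r)=j^*$.
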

\begin{proof}
Part (i) is once again an easy consequence of the description of
the absolute order on $\sS_n$ given in Section \ref{intro}. Part
(ii) follows from part (i) and Remark \ref{lem:tech0} (the latter
is applied either for $p = r$ to $w$ and a cycle $c$ of length $n$
satisfying $c(i) = i+1$ for $1 \le i \le r-1$, if no element of
$J$ appears in the same cycle of $w$ with some $1 \le i \le r$, or
for $p=r+1$ and a cycle $c$ of length $n$ satisfying $c(i) = i+1$
for $1 \le i \le r-1$ and $c(r) = j$, if $j \in J$ appears in the
same cycle of $w$ with some $1 \le i \le r$).
\end{proof}

\medskip
\noindent \emph{Proof of Proposition \ref{prop:gen}.} We denote by
$\iI_n (R)$ the order ideal of $P_n$ generated by $\sS_n (R)$, so
that $\iI_n (R)$ is a graded poset of rank $n-k-1$, and by $m$ the
number of elements of $[n]$ not appearing in $R = (\sigma,
\tau_0,\dots,\tau_k)$. We proceed by induction on $n$, $n-k$ and
$m$, in this order.

We assume $n \ge 3$, the result being trivial otherwise. We first
treat the case $k \ge 1$. For $m=0$, the poset $\iI_n (R)$ is
isomorphic to the direct product $\iI_r (S) \times P_{r_1} \times
\cdots \times P_{r_k}$ where $S = (\sigma, \tau_0)$, $r$ is the
number of elements of $[n]$ appearing in $S$ and $r_i$ is the
cardinality of $\tau_i$ for $1 \le i \le k$. Since $\iI_r (S)$ and
$P_{r_i}$ are strongly constructible by our induction hypothesis
on $n$, the poset $\iI_n (R)$ is strongly constructible by Lemma
\ref{lem:product}. Suppose now that $m \ge 1$ and let $j$ be an
element of $[n]$ which does not appear in $R$. Clearly we have
\[ \iI_n (R) \ = \ \bigcup_{i=0}^k \, \iI_n (R_i), \]
where $R_i$ is obtained from $R$ by adding $j$ in the set
$\tau_i$. Each ideal $\iI_n (R_i)$ has rank $n-k-1$ and, by our
induction hypothesis on $m$, it is strongly constructible.
Moreover, the intersection of any two or more of these ideals is
equal to $\iI_n (S)$, where $S = (\sigma,
\tau_0,\dots,\tau_{k+1})$ with $\tau_{k+1} = \{j\}$. Since $\iI_n
(S)$ has rank $n-k-2$, it is strongly constructible by our
induction hypothesis on $n-k$. It follows from Lemma
\ref{lem:wconstruct2} that $\iI_n (R)$ is strongly constructible
as well.

Finally, suppose that $k=0$. Since the elements of $\tau_0$ are
irrelevant in this case, we may assume that $\tau_0$ is empty.
Clearly, the isomorphism type of $\iI_n (R)$ depends only on the
length of $\sigma$. Thus, for convenience with the notation, we
will also assume that $\sigma = (1, 2,\dots,r)$ for some $1 \le r
\le n$. If $m=0$, so that $r=n$, then $\sS_n (R)$ consists of a
single cycle of length $n$ and $\iI_n (R)$ is isomorphic to the
lattice of noncrossing partitions of $[n]$. Thus $\iI_n (R)$ is
bounded and pure shellable \cite[Example 2.9]{Bj1} and, in
particular, strongly constructible. Suppose that $m \ge 1$, so
that $r \le n-1$. For $r+1 \le j \le n$ we set $R_j = (\sigma_j,
\varnothing)$, where $\sigma_j = (1,\dots,r, j)$ is obtained from
$\sigma$ by attaching $j$ at the end. Each ideal $\iI_n (R_j)$ has
rank $n-1$ and, by our induction hypothesis on $m$, it is strongly
constructible. Moreover, we have
\[ \iI_n (R) \ = \ \bigcup_{j=r+1}^n \, \iI_n (R_j). \]
In view of Lemma \ref{lem:wconstruct2}, to prove that $\iI_n (R)$
is strongly constructible it suffices to show that the
intersection of any two or more of the ideals $\iI_n (R_j)$ is
strongly constructible of rank $n-2$. Let $J$ be any subset of
$\{r+1,\dots,n\}$ with at least two elements. We claim that
\begin{equation}
\bigcap_{j \in J} \, \iI_n (R_j) \ = \ \iI_n (S_0) \, \cup \,
\left( \bigcup_{j \in J} \, \iI_n (S_j) \right), \label{eq:inter1}
\end{equation}
where $S_0 = (\sigma, \varnothing, J)$ and $S_j = (\sigma_j,
\varnothing, J \sm \{j\})$ for $j \in J$. Indeed, it should be
clear that each ideal $\iI_n (S_j)$ for $j \in J \cup \{0\}$ is
contained in the intersection in the left hand-side of
(\ref{eq:inter1}). The reverse inclusion follows from Lemma
\ref{lem:tech} (ii). Next, we note that the ideals $\iI_n (S_j)$
for $j \in J \cup \{0\}$ have rank $n-2$ and that, by our
induction hypothesis on $n-k$, they are strongly constructible.
Applying induction on the cardinality of $J$, to show that the
union in the right hand-side of (\ref{eq:inter1}) is strongly
constructible it suffices to show that for $q \in J$, the
intersection
\[ \iI_n (S_q) \, \cap \, \left( \bigcup_{j \in (J \sm \{q\})
\cup \{0\}} \, \iI_n (S_j) \right) \]
is strongly constructible of rank $n-3$. We claim that this
intersection is equal to $\iI_n (S)$, where $S = (\sigma,
\varnothing, J \sm\{q\}, \{q\})$. Indeed, one inclusion follows
from the fact that $\iI_n (S) \subseteq \iI_n (S_q) \cap \iI_n
(S_0)$. For the reverse inclusion observe that in each permutation
in $\sS_n (S_q)$, $q$ appears in a cycle containing $1, 2,\dots,r$
but no element of $J \sm \{q\}$ and that for all $j \in (J \sm
\{q\}) \cup \{0\}$, in each permutation in $\sS_n (S_j)$, $q$
appears in a cycle containing none of $1, 2,\dots,r$. Finally,
observe that the ideal $\iI_n (S)$ has the desired rank $n-3$ and
is strongly constructible by our induction hypothesis on $n-k$.
This completes the induction and the proof of the proposition.
\qed

\medskip
\noindent \emph{Proof of Theorem \ref{thm1}.} When $k=0$ and
$\sigma$ has length one (see Example \ref{ex:k=0}), the ideal
$\iI_n (R)$ coincides with $P_n$. Therefore Proposition
\ref{prop:gen} implies that $P_n$ is strongly constructible. It
follows from Proposition \ref{prop:wc} and Corollary \ref{cor:wCM}
that $P_n$ is homotopy Cohen-Macaulay. As a result, so is 
$\bar{P}_n$. \qed

\section{Proof of Theorem \ref{thm2}}
\label{proof2}

In this section we denote by $\hat{0}$ the minimum element of $P_n$
and by $\hat{P}$ the poset obtained from $P_n$ by adding a maximum
element $\hat{1}$.

\medskip
\noindent \emph{Proof of Theorem \ref{thm2}.} From \cite[Proposition
3.8.6]{St2} we have that $\tilde{\chi} (\Delta (\bar{P}_n)) = \mu_n (\hat{0},
\hat{1})$, where $\mu_n$ stands for the M\"obius function of $\hat{P}_n$,
and hence that
\begin{equation}
\tilde{\chi} (\Delta (\bar{P}_n)) \ = \ - \sum_{x \in P_n} \ \mu_n
(\hat{0}, x).
\label{eq:mu1}
\end{equation}
Let $C_m = \frac{1}{m+1} {2m \choose m}$ denote the $m$th Catalan number.
It is well known (see, for instance, \cite[Exercise 3.68 (b)]{St2}) that
$$\mu_n ( \hat{0}, x) = (-1)^{k-1} C_{k-1}$$ if $x \in \sS_n$ is a cycle of
length $k$, since in this case the interval $[\hat{0}, x]$ in $P_n$ is
isomorphic to the lattice of noncrossing partitions of the set $[k]$.
Moreover, for any $x \in \sS_n$ the interval $[\hat{0}, x]$ is isomorphic
to the direct product over the cycles $y$ in the cycle decomposition for
$x$ of the intervals $[\hat{0}, y]$. Therefore we have
\begin{equation}
\mu_n (\hat{0}, x) \ = \ \prod_{y \in \cC(x)} \ (-1)^{\# y - 1} \,
C_{\# y - 1},
\label{eq:mu2}
\end{equation}
where $\cC(x)$ is the set of cycles in the cycle decomposition for $x$
and $\# y$ is the cardinality of $y$.
Given (\ref{eq:mu1}) and (\ref{eq:mu2}), the exponential formula
\cite[Corollary 5.1.9]{St4} implies that
\begin{equation}
1 - \sum_{n \ge 1} \ \tilde{\chi} (\Delta (\bar{P}_n)) \, \frac{t^n}{n!}
\ = \ \exp \ \sum_{n \ge 1} \ (-1)^{n-1} C_{n-1} \, \frac{t^n}{n}.
\label{eq:mu3}
\end{equation}
Integrating the well known equality
\[ \sum_{n \ge 1} \ C_{n-1} \, t^{n-1} \ = \ \frac{1 - \sqrt{1-4t}}{2t} \]
we get
\[ \sum_{n \ge 1} \ C_{n-1} \, \frac{t^n}{n} \ = \ 1 - \sqrt{1-4t} \, + \,
\log \left( 1 + \sqrt{1-4t} \right) - \log 2. \]
Switching $t$ to $-t$ in the previous equality and exponentiating, we get
\[ \exp \ \sum_{n \ge 1} \ (-1)^{n-1} C_{n-1} \frac{t^n}{n} \ = \
\frac{\sqrt{1+4t}-1}{2t} \ \exp \left( \sqrt{1+4t}-1 \right). \]
In view of the previous equality, the result follows by switching $t$ to
$-t$ in (\ref{eq:mu3}).
\qed

\begin{remark}
{\rm It follows from Theorems \ref{thm1} and \ref{thm2} that if
$C(t) = \frac{1}{2t} \, (1 - \sqrt{1-4t})$ then the generating
function $$1 - C(t) \exp \left\{ -2t \, C(t) \right\}$$ has
nonnegative coefficients.} \qed
\end{remark}
Table \ref{numbers} lists the first few values of $(-1)^n
\tilde{\chi} (\Delta (\bar{P}_n))$.

\begin{table}[hptb]
\begin{center}
\begin{tabular}{| c| c | c | c | c | c | c | c | c | c | c |} \hline
 \ \ \ \ $n$ & 1 & 2 & 3 & 4 & 5 & 6 & 7 & 8 & 9\\ \hline
 $(-1)^n \tilde{\chi} (\Delta (\bar{P}_n))$
& 1 & 0 & 2 & 16 & 192 & 3008 & 58480 & 1360896 & 36931328
\\ \hline
\end{tabular}
\caption{The numbers $(-1)^n \, \tilde{\chi} (\Delta (\bar{P}_n))$
for $n \le 9$. }
 \label{numbers}
\end{center}
\end{table}

\end{document}